\newcommand{\erre} {{\mathbb {R}}}
\newcommand {\elle} {{\mathscr {L}}}
\newcommand {\elleo} {{\mathscr{L}_0}}
\def\erren{{\erre^{ {N} }}}
\def\erreu{{\erre^{ {N+1} }}}
\def\av{``}
\def\cv{''}
\def\inn{\mbox{ in }}
\def\andd{ \quad\mbox{ and } \quad }
\newcommand{\tende}{\rightarrow}
\newcommand{\ttende}{\longrightarrow}
\newcommand{\enne} {\mathbb{N}}
\newcommand{\meno} {\setminus}
\def\R{\mathbb R}  
\def\hh{{\vskip 1mm \noindent }}
\def\vv{{\vskip 1mm}}
\def\P{{\mathbb P}}
\def\E{{\mathbb E}}
\newtheorem{theorem}{Theorem}[section]
\newtheorem{lemma}[theorem]{Lemma}
\theoremstyle{remark}
\newtheorem{remark}[theorem]{Remark}
\theoremstyle{definition}
\theoremstyle{remark}
\numberwithin{equation}{section}
\title[Harnack inequality and Liouville-type theorems ]{ Harnack inequality and Liouville-type theorems \\for\\ Ornstein--Uhlenbeck and  Kolmogorov operators}
\author{Alessia E. Kogoj}
\address{Dipartimento di Scienze Pure e Applicate (DiSPeA)\\ 
				 Universit\`{a} degli Studi di Urbino Carlo Bo\\
				 Piazza della Repubblica 13, 61029 Urbino (PU), Italy.}
\email{alessia.kogoj@uniurb.it}
\author{Ermanno Lanconelli}
\address{Dipartimento di Matematica\\ 
				 Alma Mater Studiorum Universit\`{a} di Bologna\\
				 Piazza di Porta San Donato 5, 40126 Bologna, Italy.}
\email{ermanno.lanconelli@uniurb.it}
\author{Enrico Priola}
\address{Dipartimento di Matematica\\ 
				 Universit\`{a} degli Studi di Pavia\\
				 Via Adolfo Ferrata 5, 27100 Pavia, Italy.}
\email{enrico.priola@unipv.it}
\subjclass[2010]{35H10; 35B53; 35R03; 35J15; 35K10}
\keywords{Liouville theorems, Harnack inequalities, Ornstein--Uhlenbeck operators, Kolmogorov operators}
\begin{document} 
 
 \dedicatory{\it Dedicato a Sandro Salsa con stima ed amicizia }

\begin{abstract}
We prove, with a purely analytic technique,  a one-side Liouville theorem for a  class
of Ornstein--Uhlenbeck operators ${\mathcal L_0}$ in $\mathbb{R}^N$, as a consequence of a
 Liouville theorem at ``$t=- \infty$"  for the corresponding  Kolmogorov operators
${\mathcal L_0} - \partial_t$ in $\mathbb{R}^{N+1}$. In turn, this last result is proved as a corollary of  
a global Harnack inequality for non-negative solutions to $({\mathcal L_0} - \partial_t) u = 0$ which seems to have
an independent interest in its own right.  
 We stress that our Liouville theorem for ${\mathcal L_0}$ cannot be obtained by a probabilistic approach based on recurrence if $N>2$.
 We provide a self-contained proof of  a Liouville theorem involving recurrent Ornstein--Uhlenbeck stochastic processes  in the Appendix.
\end{abstract}
  
\maketitle

\section{Introduction and main results}

The main \av motivation\cv of this paper is to provide a purely analytical proof of a {\it one-side} Liouville theorem for the following Ornstein--Uhlenbeck operator  in $\erre^N$: 
 
 \begin{equation}\label{OU}  \mathscr{L}_0 :  = \varDelta + \langle Bx,  \nabla \rangle, \end{equation}
 where  $\varDelta$ is the Laplace operator, while $\langle\ , \ \rangle$  and $\nabla$ denote, respectively, the inner product 
 and the gradient in $\erre^N$. Moreover $B$ is a $N\times N$ real matrix which we suppose to satisfy the following conditions: letting 
 
 \begin{eqnarray}\label{1.2} E(t):=\exp(-tB),   \end{eqnarray}   then, 
 
  \begin{equation}\tag{H}\label{H} b:=\sup_{t\in\erre} \| E(t)\| < \infty.  \end{equation}  
  
  It is not difficult to show that condition \eqref{H} is equivalent to the following one: \begin{eqnarray*} &\mbox{\em B is diagonalizable over the complex field}& \\ &\mbox{ \em with all the eigenvalues on the imaginary axis.} & 
 \end{eqnarray*}  
  
  This condition is satisfied in particular if $B=-B^T$ and if $B^2=-\mathbb{I}_N$, where $\mathbb{I}_N$ is the $N\times N$ identity matrix. 
  
   Our positive  (one-side) Liouville Theorem for \eqref{pa} is the following one. 
 \begin{theorem}\label{pa} Let $v$ be a smooth\footnote{$\elleo$ is hypoelliptic, so that every distributional solution to $\elleo u =0$ actually is of class $C^\infty$.} solution to $$\elleo v=0\inn\erren.$$
 If $\inf_\erren v >-\infty$,  then  $v$ is constant. 
 \end{theorem}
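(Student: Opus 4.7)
My plan is to reduce Theorem \ref{pa} to a Liouville-type statement for the Kolmogorov operator $\elleo-\partial_t$, which in turn will rest on a global Harnack inequality for nonnegative solutions, exactly along the lines announced in the abstract.

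First, subtracting the finite constant $\inf_\erren v$, I may assume $v\geq 0$ on $\erren$ and $\inf v=0$; the equation $\elleo v=0$ is preserved. I then lift to space-time by setting
\[
u(x,t):=v(x), \qquad (x,t)\in\erreu.
\]
Since $\partial_t u\equiv 0$, we have $(\elleo-\partial_t)u=\elleo v=0$ on $\erreu$ and $u\geq 0$, so $u$ is a nonnegative global (ancient) solution of the Kolmogorov equation. Because $u$ is time-independent, every growth or behavior condition at $t=-\infty$ is trivially satisfied, and any Liouville-type theorem that forces a nonnegative ancient solution of $(\elleo-\partial_t)u=0$ to be constant will immediately give the claim.

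To extract such a parabolic Liouville from a global Harnack estimate of the form
\[
u(x,s)\leq C(x,y,t-s)\,u(y,t), \qquad s<t,
\]
I would evaluate both sides on the lifted solution to obtain $v(x)\leq C(x,y,t-s)\,v(y)$ for every $x,y\in\erren$ and every $t-s>0$. The crucial point is that, by chaining Harnack estimates along a sequence of intermediate times (or from a sufficiently sharp form of the inequality), the constant $C(x,y,\tau)$ can be driven to $1$ as $\tau\to+\infty$. This yields $v(x)\leq v(y)$ for all $x,y$, and swapping the roles of $x$ and $y$ gives $v(x)=v(y)$, i.e.\ $v$ is constant.

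The genuine obstacle lies one level below: proving the global Harnack inequality for $\elleo-\partial_t$ when $B$ is only antisymmetric. Antisymmetry makes $e^{\tau B}$ orthogonal and reduces the covariance of the Mehler kernel to $2\tau I$, so the fundamental solution is an explicit shifted Gaussian, and this is an analytic advantage. On the other hand, the drift $Bx$ is not dissipative: the associated OU semigroup admits no invariant probability measure, and for $N\geq 3$ the underlying process is transient, so no ergodic or recurrence argument is available (as also emphasized in the abstract). Consequently, the Harnack inequality must be extracted analytically from the Mehler kernel, and the sharp control of its Gaussian factor along the orbits of $e^{\tau B}$ — together with the optimization over those orbits in the Harnack chain — is where I would expect to spend the bulk of the technical effort.
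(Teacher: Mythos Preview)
Your overall architecture matches the paper exactly: lift $v$ to the time-independent function $u(x,t)=v(x)$, reduce to a parabolic Liouville for $\elle=\elleo-\partial_t$, and derive the latter from a global Harnack inequality proved via mean-value formulas on the $\elle$-level sets (this is indeed where the technical work lies).

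The gap is in your passage from Harnack to Liouville. You write that ``by chaining Harnack estimates along a sequence of intermediate times \ldots\ the constant $C(x,y,\tau)$ can be driven to $1$ as $\tau\to+\infty$''. Chaining does the opposite: it multiplies constants and makes them worse. A constant tending to $1$ would require a sharp (Li--Yau type) inequality, which is a separate and stronger statement than the Harnack the paper actually proves. The paper's Harnack (Theorem~\ref{harnack}) has a \emph{fixed} constant $C$: for every nonnegative solution and every $z_0$, one has $u(z)\le C\,u(z_0)$ for all $z$ in the translated paraboloid $P(z_0)$.

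The correct way to close the argument with a fixed $C$ is the ``infimum trick'' (this is the paper's Theorem~\ref{liouvilleainfinito}). After normalizing $m:=\inf u=0$, given $\varepsilon>0$ pick $z_\varepsilon$ with $u(z_\varepsilon)<\varepsilon$ and apply Harnack with $z_0=z_\varepsilon$:
\[
u(z)\le C\,u(z_\varepsilon)<C\varepsilon\qquad\text{for all }z\in P(z_\varepsilon).
\]
Because $E(\tau)$ is orthogonal, Lemma~\ref{claim} shows that for each fixed $x$ the point $(x,t)$ lies in $P(z_\varepsilon)$ once $t$ is sufficiently negative; hence $\limsup_{t\to-\infty}u(x,t)\le C\varepsilon$ and, since $\varepsilon$ is arbitrary, $\lim_{t\to-\infty}u(x,t)=0$. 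For your time-independent $u=v$ this gives $v\equiv 0$. So you do not need the constant to approach $1$; you need to center the Harnack paraboloid at a near-minimizer and let $\varepsilon\to 0$.
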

 
If we assume the solution $u$ to be bounded both from below and from above then the conclusion of Theorem \ref{pa} immediately follows from a theorem due to Priola and Zabczyk \cite[Theorem 3.1]{priola_zabczyk}, which, for the operator $\elleo$ in    \eqref{OU}, takes this form:\\
{\em Consider the Ornstein--Uhlenbeck operator 
$$\elleo=  \varDelta + \langle Bx,  \nabla \rangle,$$ where $B$ is any $N\times N$ constant matrix. Then the following statements are equivalent:
\begin{itemize}
\item[$(i)$] $\elleo$ has the simple Liouville property, i.e., 
$$\elleo v=0\inn\erren,\ \sup_\erren |v| < \infty \implies v\equiv \mathrm{constant};$$ 
\item[$(ii)$] the real part of every eigenvalue of the matrix B is non-positive.
\end{itemize}   
} 
If the matrix $B$ satisfies \eqref{H}, its eigenvalues have real part equal to zero. Then,
 the aforementioned Priola and Zabczyk theorem implies that the {\em bounded} solutions to $\elleo v=0$ in $\erren$ are constant.

Theorem \ref{pa} is a Corollary of the following Liouville theorem \av at $t=-\infty$\cv\  for the {\em evolution counterpart} of $\elleo$, i.e., for the Kolmogorov operator in $\erreu=\erre_x^N\times \erre_t$
 \begin{equation}\label{K}  \mathscr{L}:  = \varDelta + \langle Bx, \nabla \rangle-\partial_t. \end{equation}

\begin{theorem}\label{liouvilleainfinito} Let $u$ be a smooth solution to $$\elle u=0\inn\erreu.$$
 If $\inf_\erren u >-\infty$,  then  $$\lim_{t\tende -\infty} u(x,t)=\inf_\erreu u \quad \mbox{for every\ } x\in \erren.$$  \end{theorem}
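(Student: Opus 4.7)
The plan is to deduce Theorem~\ref{liouvilleainfinito} from the global Harnack inequality for non-negative solutions of $\elle v = 0$ announced in the abstract; the antisymmetry of $B$ will make the Harnack constant well-behaved precisely in the relevant limit $s-t \tende +\infty$, and, once the Harnack is in hand in the right form, the deduction itself is very short.

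The first move is a normalization. Setting $m := \inf_{\erreu} u \in \erre$ and $w := u - m$, the constant $m$ is annihilated by $\elle$, so $w$ is again a smooth solution of $\elle w = 0$, with $w \geq 0$ and $\inf_{\erreu} w = 0$. The claim reduces to proving $\lim_{t \tende -\infty} w(x, t) = 0$ for every $x \in \erren$. Fix such an $x$ and a parameter $\eps > 0$. Since $\inf_{\erreu} w = 0$, there exists $(y, s) \in \erreu$ with $w(y, s) < \eps$. The global Harnack inequality, applied to the non-negative solution $w$ between $(x, t)$ and $(y, s)$ for $t < s$, should yield a bound of the form
$$w(x, t) \leq H(x, y, s - t)\, w(y, s),$$
where $H$ is the explicit Gaussian factor coming from the fundamental solution of $\elle$; in the antisymmetric case its leading form is $\exp\!\bigl(|y - e^{(s-t) B} x|^{2} / 4(s - t)\bigr)$, by analogy with the Li--Yau Harnack for positive ancient solutions of the heat equation.

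The key observation is that $B^{T} = -B$ makes $e^{\tau B}$ orthogonal for every $\tau \in \erre$, so $|y - e^{(s-t)B} x| \leq |y| + |x|$ uniformly in $s - t$. As $t \tende -\infty$ with $(y, s)$ held fixed, the denominator $s - t$ diverges to $+\infty$ while the numerator stays bounded, so $H(x, y, s-t) \tende 1$. Passing to the $\limsup$ in the Harnack bound yields $\limsup_{t \tende -\infty} w(x, t) \leq \eps$; because $\eps > 0$ is arbitrary and $w \geq 0$, both $\limsup$ and $\liminf$ at $-\infty$ coincide with $0$, that is, $\lim_{t \tende -\infty} u(x, t) = m = \inf_{\erreu} u$, as claimed.

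The main obstacle is thus not the above argument but the global Harnack inequality itself, together with the explicit control on its constant as the time gap grows. The antisymmetry of $B$ enters exactly at this point: if $B$ had an eigenvalue with positive real part, the propagated point $e^{\tau B} y$ would blow up, the Gaussian exponent would not decay, and the deduction above would collapse---consistent with the fact that the one-side Liouville conclusion cannot be expected in that generality.
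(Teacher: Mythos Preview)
Your approach is essentially the same as the paper's: normalize to $w=u-m\ge 0$, pick a point $(y,s)=z_\varepsilon$ where $w<\varepsilon$, apply a global Harnack inequality to bound $w(x,t)$ for $t$ very negative, and exploit the orthogonality of $e^{\tau B}$ (coming from $B^T=-B$) to control the space-dependent part of the Harnack estimate as $t\to-\infty$. The only substantive difference is the \emph{form} of the Harnack inequality you invoke. The paper does not prove a Li--Yau-type estimate with the explicit Gaussian factor $H(x,y,s-t)=\exp\bigl(|y-e^{(s-t)B}x|^2/4(s-t)\bigr)$; instead it proves (Theorem~\ref{harnack}) a Harnack inequality with a \emph{fixed} constant $C$, valid on the translated paraboloid $P(z_\varepsilon)=\{(x,t):|x-E(t-s)y|^2<4(s-t)\}$, and then shows separately (Lemma~\ref{claim}, which is exactly your orthogonality observation) that for each fixed $x$ the point $(x,t)$ lies in $P(z_\varepsilon)$ once $t$ is sufficiently negative. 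This yields $w(x,t)\le C\varepsilon$ for all such $t$, hence $\limsup_{t\to-\infty}w(x,t)\le C\varepsilon$, and one concludes since $\varepsilon$ is arbitrary. Note that you do not actually need $H\to 1$; boundedness of the Harnack constant suffices, which is precisely what the paper establishes. Your sharper Li--Yau form is plausible (the fundamental solution is the heat kernel in these coordinates) but is not proved in the paper, so if you want to rely on it you would have to supply a separate argument.
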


It easy to show that this theorem implies Theorem \ref{pa}. Indeed, let $v:\erren\ttende\erre$ be a smooth and bounded below solution to $\elleo v=0$ in $\erren$. Then, letting 
$$u(x,t)=v(x),\quad x\in \erren,\quad t\in \erre,$$ we have
$$\elle u=0\inn\erreu\andd \inf_\erreu u=\inf_\erren v>-\infty.$$ 
Then, by Theorem \ref{liouvilleainfinito}, 
$$\inf_\erren v = \inf_\erreu u = \lim_{t\tende -\infty} u(x,t) = v(x) \quad \mbox{for every\ } x\in \erren.$$
Hence, $v$ is constant.

From Theorem \ref{liouvilleainfinito} it also follows a Liouville theorem for {\em bounded} solutions to \mbox{$\elle u=0$} (for a related result see Theorem 3.6 in \cite{priola_wang_2006}).  

\begin{theorem}\label{liouvillesemplice} Let $u$ be a bounded smooth solution to $$\elle u=0\inn\erreu.$$
Then, $u$ is constant.  
\end{theorem}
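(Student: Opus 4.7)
The plan is to deduce Theorem \ref{liouvillesemplice} as a direct corollary of Theorem \ref{liouvilleainfinito} by applying it to both $u$ and $-u$, exploiting the linearity of $\elle$. The whole point is that boundedness means both the infimum and the negative of the supremum are finite real numbers, so both $u$ and $-u$ satisfy the one-sided hypothesis of Theorem \ref{liouvilleainfinito}.

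First, since $u$ is bounded, set $m := \inf_\erreu u$ and $M := \sup_\erreu u$, both finite. Applying Theorem \ref{liouvilleainfinito} directly to $u$, which satisfies $\elle u = 0$ and $\inf u = m > -\infty$, yields
\[ \lim_{t \to -\infty} u(x,t) = m \quad \text{for every } x \in \erren. \]

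Next I would observe that $-u$ is also a smooth solution of $\elle(-u) = 0$ in $\erreu$ (since $\elle$ is a linear operator) and that $\inf_\erreu(-u) = -M > -\infty$. Applying Theorem \ref{liouvilleainfinito} to $-u$ gives
\[ \lim_{t \to -\infty} (-u)(x,t) = -M, \quad \text{i.e.} \quad \lim_{t \to -\infty} u(x,t) = M \quad \text{for every } x \in \erren. \]

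Comparing the two limits forces $m = M$, hence $u \equiv m$ is constant on $\erreu$. There is no real obstacle here beyond correctly identifying that both one-sided bounds can be fed into the same theorem; the heavy lifting has already been carried out in establishing Theorem \ref{liouvilleainfinito} (and, behind it, the global Harnack inequality mentioned in the abstract). The only thing worth double-checking is that Theorem \ref{liouvilleainfinito} is genuinely applicable to $-u$, which is immediate because the class of smooth solutions to $\elle w = 0$ is a vector space.
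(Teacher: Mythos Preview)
Your proof is correct and is essentially the same as the paper's: the authors apply Theorem \ref{liouvilleainfinito} to $M-u$ and $u-m$ rather than to $u$ and $-u$, but this is a cosmetic difference, and the conclusion $m=M$ follows identically by comparing the two limits at $t\to-\infty$.
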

\begin{proof} Let $$m=\inf_\erreu u  \andd  M=\sup_\erreu u.$$ Applying Theorem \ref{liouvilleainfinito} to  $M-u$ and $u-m$, we obtain for every $x\in\erren$ that:

$$0=\inf_\erreu(M-u)=\lim_{t\to -\infty} (M-u(x,t))$$
and 

$$0=\inf_\erreu(u-m)=\lim_{t\to -\infty} (u(x,t)-m).$$
Hence, $M=m$ and $u$ is constant. 
 \end{proof} 
Theorem \ref{liouvilleainfinito} is, in turn, a consequence of a \av global\cv\ Harnack inequality for non-negative solutions to $\elle u=0$ in $\erreu.$ To state this inequality we need to recall that $\elle$ is left translation invariant on the Lie group $\mathbb{K}=(\erreu,\circ)$ with composition law 
$$(x,t)\circ (y,\tau)=(y+E(\tau)x, t+\tau),$$  
see \cite{lanconelli_polidoro_1994}. For every $z_0$ in $\erreu$ we define the \av paraboloid\cv $$P(z_0)=z_0\circ P,$$ 
where
$$P=\left\{ (x,t)\in \erreu \ : \ t< - \frac{|x|^2}{4}\right\}.$$

Then, inspired by an idea used in  \cite{garofalo_lanconelli_1989}  for classical parabolic operators, and exploiting
  Mean Value formulas for solutions to $\elle u=0$,  we establish the following Harnack inequality.

\begin{theorem}\label{harnack}Let $z_0\in \erreu$ and let $u$ be a non-negative smooth solution to $$\elle u=0 \inn \erreu.$$
Then, there exists a positive constant $C$, independent of $u$ and $z_0$, such that 
$$u(z)\le C u(z_0),$$ 
for every $z\in P(z_0).$
\end{theorem}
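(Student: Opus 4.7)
The proof reduces to the base point $z_0=0$ by left-invariance of $\elle$ on $\mathbb{K}$: applying the inequality at $0$ to $\tilde u(\zeta)=u(z_0\circ\zeta)$ gives it at $z_0$, since $P(z_0)=z_0\circ P$. Hence it suffices to produce a constant $C$, independent of $u$ and of $z$, such that $u(z)\le C u(0)$ for every $z\in P$.

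Since $B^T=-B$, the matrix $E(\tau)=e^{-\tau B}$ is orthogonal, and the fundamental solution of $\elle$ with pole at the origin collapses to the classical Gaussian $\Gamma(y,s)=(4\pi|s|)^{-N/2}\exp(-|y|^2/(4|s|))$ for $s<0$. The level sets $\Omega_r=\{\Gamma>1/r\}$ are therefore the usual heat balls, and a standard integration-by-parts argument using $\elle u=0$ (together with $\mathrm{div}(Bx)=0$) yields the Pini--Watson mean value formula
\[
u(0)=\frac{1}{4r}\int_{\Omega_r}u(y,s)\,\frac{|y|^2}{s^2}\,dy\,ds.
\]
By left-invariance, the translated identity $u(z)=(4\rho)^{-1}\int_{\Omega_\rho(z)}u(\zeta)K_z(\zeta)\,d\zeta$ holds at each $z\in\erreu$, with $\Omega_\rho(z)=z\circ\Omega_\rho$ and $K_z$ the left-translated Pini--Watson weight.

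The heart of the argument is to match these two formulas when $z=(x,t)\in P$. The defining condition of $P$ is precisely $|x|^2<4|t|$, and a direct Gaussian computation (exploiting $|E(\tau)y|=|y|$) shows that on this set the ratio $\Gamma(z;\cdot)/\Gamma((0,0);\cdot)$ is bounded above by $e$. The plan is to pick $\rho=\rho(z)$ so that $\Omega_\rho(z)\subset\Omega_r$ for a suitable $r=r(z)$, while at the same time the ratio $(r/\rho)\cdot(K_z/K)$ stays bounded by a constant independent of $z\in P$; plugging the two formulas together then gives $u(z)\le Cu(0)$.

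The main obstacle is this geometric-quantitative step. Because $P$ is unbounded, $\rho$ and $r$ must depend genuinely on $z$; arranging simultaneously the inclusion of heat balls and the uniform bound on the weight ratio with the same $C$ for every $z\in P$ uses decisively the specific shape of the paraboloid — namely, that $P$ is exactly the locus where the Gaussian factor in $\Gamma$ stays within $e^{-1}$ of its spatial maximum.
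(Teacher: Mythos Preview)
Your outline is on the right track---the paper follows exactly the same scheme: reduce to $z_0=0$ by left-invariance, write the mean-value formula at $0$ on a large level set, include in it a smaller level set centered at $z$, and compare the two kernels. But what you wrote is a plan, not a proof, and the step you yourself flag as ``the main obstacle'' is in fact a real obstruction with the tools you chose.

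The concrete gap is the kernel comparison. With the plain Pini--Watson weight $K_0(\xi,\tau)=|\xi|^2/\tau^2$ at the origin you would need a \emph{lower} bound for $K_0$ on the translated ball $\Omega_\rho(z)$, uniformly in $z\in P$. This fails: $K_0$ vanishes on the whole time-axis $\{\xi=0\}$, and $\Omega_\rho(z)$ generically meets that axis (since $z\circ(y,s)=(y+E(s)x,t+s)$ has vanishing spatial part whenever $|y|=|x|$, which happens inside $\Omega_\rho$ as soon as $\rho$ is moderately large). So the ratio $K_z/K_0$ is unbounded on $\Omega_\rho(z)$ and your ``$(r/\rho)\cdot(K_z/K)$ bounded'' cannot hold. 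The observation that $\Gamma(z;\cdot)/\Gamma(0;\cdot)\le e$ on $P$ helps with the \emph{inclusion} of level sets, but it does nothing for the weight ratio.

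The paper resolves this by replacing the Pini--Watson kernel with the augmented kernels
\[
W_r^{(p)}(\zeta)=\omega_p\,R_r^p(0,\zeta)\Big(W(\zeta)+\frac{p}{4(p+2)}\Big(\frac{R_r(0,\zeta)}{t}\Big)^2\Big),
\qquad R_r(0,\zeta)=\sqrt{-4t\log(r\phi_p(0,\zeta))},
\]
for which a mean-value identity still holds. The added term $\big(R_r/t\big)^2$ is strictly positive throughout the level set, so one gets a genuine lower bound $W_{2\theta r}^{(p)}(\zeta)\ge c_p\,r^{(p-2)/(N+p)}$. The matching upper bound $W_r^{(p)}(z^{-1}\circ\zeta)\le c_p\,r^{(p-2)/(N+p)}$ uses crucially that $p>4$ (otherwise the sup defining the constants is infinite). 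The inclusion of level sets is handled by a separate ``two onions'' lemma: for each $z\in\Sigma_r=\{t=-r^{2/(N+p)},\,|x|^2<-4t\}$ one has $\Omega_r^{(p)}(z)\subset\Omega_{\theta r}^{(p)}(0)$ with a fixed $\theta>1$. Choosing $r$ so that $z\in\Sigma_r$ then closes the argument.

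In short: your architecture is correct, but the unmodified Pini--Watson weight cannot carry the comparison; the missing idea is to work with the $p$-augmented mean-value kernels (with $p>4$), which are what make both the lower bound at $0$ and the upper bound at $z$ come out with the \emph{same} power of $r$.
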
 
We will prove this theorem in Section \ref{proof_harnack}. Here we show how it implies Theorem \ref{liouvilleainfinito} by using the following lemma
(for the  reader's  convenience we postpone its proof to  Section \ref{proof_lemma}).
\begin{lemma}\label{claim} For every $x\in\erren$ and for every $z_0\in\erreu$ there exists a real number $T=T(x,z_0)$ such that 
$$(x,t)\in P(z_0)\qquad \forall t <T.$$ 
\end{lemma}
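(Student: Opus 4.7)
The plan is to unpack the definition of $P(z_0)$ using the composition law and exploit the antisymmetry of $B$.

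Write $z_0 = (x_0, t_0)$. By definition $(x,t) \in P(z_0) = z_0 \circ P$ iff there exists $(y,\tau) \in P$ with $(x,t) = (x_0,t_0) \circ (y,\tau) = (y + E(\tau) x_0, \; t_0 + \tau)$. Solving these equations forces $\tau = t - t_0$ and $y = x - E(t-t_0) x_0$, and the condition $(y,\tau) \in P$ becomes
$$
|x - E(t-t_0) x_0|^2 < 4(t_0 - t).
$$
So the claim reduces to showing that this inequality holds for all sufficiently negative $t$.

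Here the assumption $B^T = -B$ enters crucially. Since $E(\tau) = \exp(-\tau B)$ and $\exp(-\tau B)^T \exp(-\tau B) = \exp(\tau B) \exp(-\tau B) = I$, the matrix $E(\tau)$ is orthogonal for every $\tau \in \erre$. Consequently $|E(t - t_0) x_0| = |x_0|$, and by the triangle inequality
$$
|x - E(t - t_0) x_0| \le |x| + |x_0| \quad \text{for every } t \in \erre.
$$
In particular the left-hand side of the displayed inequality is uniformly bounded by the constant $(|x| + |x_0|)^2$, while the right-hand side tends to $+\infty$ as $t \to -\infty$. Setting
$$
T := t_0 - \tfrac{1}{4}(|x| + |x_0|)^2,
$$
we obtain $|x - E(t-t_0) x_0|^2 \le (|x|+|x_0|)^2 < 4(t_0 - t)$ for every $t < T$, hence $(x,t) \in P(z_0)$.

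There is essentially no obstacle here: the whole argument hinges on the orthogonality of $E(\tau)$, which is precisely where the antisymmetry hypothesis on $B$ is used (without it, $|E(\tau) x_0|$ could grow exponentially and the lemma would fail). The rest is a direct unwinding of the definitions.
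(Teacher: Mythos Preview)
Your proof is correct and follows essentially the same route as the paper: unwind the group law to reduce membership in $P(z_0)$ to the inequality $|x-E(t-t_0)x_0|^2<4(t_0-t)$, then use that $E(\tau)$ is orthogonal (from $B^T=-B$) together with the triangle inequality to bound the left-hand side uniformly while the right-hand side tends to infinity. The only difference is cosmetic---you produce an explicit $T=t_0-\tfrac14(|x|+|x_0|)^2$, whereas the paper simply notes that the ratio tends to $0$.
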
 
\begin{proof}[Proof of Theorem \ref{liouvilleainfinito}] Let $u$ be a smooth bounded below solution to $\elle u=0$ in $\erreu.$
Define $$m=\inf_\erreu u.$$ Then, for every $\varepsilon>0$, there exists $z_\varepsilon \in \erreu$ such that 
$$ u(z_\varepsilon) - m < \varepsilon.$$
Theorem \ref{harnack} applies to  function $u - m$, so that

\begin{equation}\label{pallino} u(z) - m < C ( u(z_\varepsilon) - m) < C\varepsilon,\end{equation} 
for every $z\in P(z_\varepsilon),$ where $C>0$ does not depend on $z$ and on $\varepsilon$. Let us now fix $x\in\erren.$ By Lemma \ref{claim}, there exists $T=T(z_\varepsilon,x)\in\erre$ such that $(x,t)\in P(z_\varepsilon)$ for every $t<T.$ Then, from \eqref{pallino}, we get 
$$ 0\le u(x,t) - m \le C\varepsilon\qquad \forall t<T.$$
This means 
$$\lim_{t\to -\infty} u(x,t) =m.$$

\end{proof} 

We conclude the  introduction with the following remark.
\begin{remark}  One-side Liouville theorems for a class of  Ornstein--Uhlenbeck 
operators can be proved by a probabilistic approach based on recurrence
of the corresponding Ornstein--Uhlenbeck  process. We present this approach in  Appendix,
showing how it leads to one-side Liouville theorems 
also for  degenerate Ornstein--Uhlenbeck operators. 
However,  the  results obtained with this  probabilistic approach
contain Theorem  \ref{OU} only in the case $N = 2.$ We mention that, in this last case, Theorem  \ref{OU} is contained
in \cite{cranston_1983}, where a full description of the 
 Martin boundary for a non-degenerate two-dimensional Ornstein--Uhlenbeck operator  is given.

We also mention that under particular assumptions on the matrix $B$ that make the operator $\elle$ homogenous with respect to a group  of dilations,  asymptotic Liouville theorems at $t=-\infty$ for the solutions  to $\elle u=0$ in $\erreu$ are known (see \cite{kogoj_lanconelli_2007} and the references therein); as a consequence, in such cases, one-side Liouville theorems  for the solutions  to $\elle_0 v=0$ hold. 

  \end{remark} 

\section{Some preliminaries} 
\subsection{} The matrix 

$$E(\tau)=\exp(-\tau B),\quad \tau\in\erre,$$
introduced in \eqref{1.2}, plays a crucial r\^ole for the operator $\elle.$ First of all, as already recalled in the Introduction, defining the composition law $\circ$ in $\erreu$ as follows:

\begin{equation}\label{cl}  (x,t)\circ (y,\tau)=(y+E(\tau)x, t+\tau), \end{equation} 
we obtain a Lie group 

$$\mathbb{K}=(\erreu, \circ),$$
on which $\elle$ is left translation invariant (see \cite{lanconelli_polidoro_1994}; see also \cite{BLU}, Section 4.1.4).

As already observed, assumption \eqref{H} implies 
$$\sigma(B):=\ \{ \mbox{eigenvalues of $B$} \} \subseteq i \erre.$$

Then, since $B$ has real entries, $-\lambda\in\sigma(B)$ if $\lambda \in \sigma(B).$ As a consequence, 
$$\mathrm{trace\, }(B)=0. $$
\subsection{}  A fundamental solution for $\elle$ is given by 

\begin{equation}\label{2} \Gamma(z,\zeta) = \gamma (\zeta^{-1} \circ z),
\end{equation}
where, 
\begin{equation*} \gamma(z)=\gamma (x,t)= \begin{cases} 0 \mbox{\ if \ } t\le 0, \\  \\  \dfrac{(4\pi)^{-\frac{N}{2}} }{\sqrt{\det C(t)}Ê} \exp\left(-\dfrac{1}{4} \langle C^{-1}(t) x,x \rangle \right)   \mbox{\ if \ } t> 0,
 \end{cases} 
\end{equation*} and

\begin{equation*} C(t) = \int_0^t E(s)E(s)^T\ ds,\end{equation*} 

(see \cite[(1.7)]{lanconelli_polidoro_1994}, and keep in mind that $\mathrm{trace\,}Ê (B)=0$ since $B$ satisfies \eqref{H}).

It is noteworthy to stress that 
$$C(t) \mbox{ is symmetric and }ÊC(t)>0$$
for every $t>0.$

\subsection{} The solutions to $\elle u=0$ in $\erreu$ satisfy the following Mean Value formula: for every $z_0\in\erreu,\ r>0$ and $p\in\enne,$ 

\begin{equation}\label{mv}  u(z_0)= \frac{1}{r} \int_{{\Omega}_r^{(p)}(z_0)} u(z) W_r^{(p)} (z_0^{-1} \circ z)\ dz,
\end{equation} 
where

$$\Omega_r^{(p)} (z_0)= \left\{ z \ : \ \phi_p(z_0,z)> \frac{1}{r} \right\},$$ 
with 

$$\phi_p(z_0,z): = \frac{\Gamma(z_0,z)}{(4\pi(t_0-t))^{\frac{p}{2}}},$$
if $z=(x,t)$ and $z_0=(x_0,t_0).$ 

\begin{remark} If $z\in \Omega_r^{(p)}(z_0)$, then $\Gamma(z_0,z) >0$, hence $t_0-t>0.$  \end{remark} 

Moreover, % using $0 \in \R^{N+1}$, 
\begin{equation}\label{2.4} W_r^{(p)} (z)= \omega_p R_r^p (0,z) \left\{ W(z) + \frac{p}{4(p+2)} \left( \frac{R_r(0,z)} {t}\right)^2\right\}, 
\end{equation} 
where $\omega_p$ denotes the Lebesgue measure of the unit ball of $\erre^p,$
\begin{equation}\label{2.5} W(z)= W(x,t )= \frac{1}{4} \left| C^{-1}(t)x\right|^2\!\!\!\!,
\end{equation} 
 and  
\begin{equation}\label{2.6} R_r(0,z)=\sqrt{ 4(-t)\log(r\phi_p(0,z))}.
\end{equation} 
A complete  proof of the Mean Value formula \eqref{mv} can be found in Section 5 of  \cite{cupini_lanconelli_2020}.

\section{Proof of Lemma \ref{claim}}\label{proof_lemma} 
Let $z_0=(x_0,t_0)$ and $z=(x,t).$ Then, 
$$ z\in P(z_0)= z_0\circ P \iff z_0^{-1} \circ z \in P \iff (x- E(t-t_0) x_0, t- t_0)\in P.$$
Hence, keeping in mind the definition of $P$, 
\begin{equation} \label{4} z=(x,t)\in P(z_0) \iff \frac{{|x-E(t-t_0) x_0|}^2}{4(t_0- t)} <1.
\end{equation}
On the other hand, from \eqref{H}, we have 

\begin{equation*}  \frac{{|x-E(t-t_0) x_0|}^2}{4(t_0- t)} \le \frac{(|x|+ b |x_0|)^2}{4(t_0-t)}\ttende 0,\quad \mbox{as } t  \ttende -\infty.
\end{equation*}
Therefore: for every fixed $z_0\in\erreu$ and $x\in\erre$, there exists $T=T(z_0, x)$ s.t. 
$$z=(x,t)\in P(z_0)\qquad \forall\ t<T.$$ 

\section{ A two \av onions\cv \  lemma}

The aim of this section is to prove a geometrical lemma on the level sets $\Omega_r^{(p)}$ (which we call {\em $\elle$-\av onions\cv}), that will play a crucial  r\^ole in the proof of the Harnack inequality in Theorem \ref{harnack}. 

First of all we resume that hypothesis \eqref{H} implies:
\begin{equation}Ê\label{1.4} \frac{1}{b^2} |x|^2 \le t \langle C^{-1} (t) x, x \rangle \le b^2 |x|^2, 
\end{equation}Ê
for every $t\in\erre$ and for every $x\in\erre^N.$

Indeed, from \eqref{H},  we obtain 
$$
b:=\sup_{t\in\erre} \| E(t)^T\| < \infty.  
$$
Since we are considering the operator norm, we have
$$
|E(s)^T y| \le b |y| = b|E(-s)^T E(s)^T y|\le b^2 |E(s)^T  y|,$$
so that 
$$\frac{1}{b} |y| \le |E(s)^T  y| \le b|y|$$
for every $t\in\erre$ and every $y\in\erren$.

Then, since 
\begin{equation*} \langle C(t) y, y \rangle  = \int_0^t |E(s)^T y|^2\ ds,\end{equation*} 
we get 
$$\frac{1}{b^2} |y|^2 \le \frac{1}{t} \langle C(t) y, y \rangle \le b^2|y|^2$$
for every $y\in\erren$ and $t\in \erre\meno \{  0\}.$ If in these inequalities we choose 

$$y=(C(t))^{-\frac{1}{2}}x \mbox{\qquad   if \ } t>0$$
and 
$$y=(-C(t))^{-\frac{1}{2}}x \mbox{\qquad   if \ } t<0,$$
we immediately obtain \eqref{1.4}.

Now, for every $r>0$, define 

$$\Sigma_r = \left \{ z=(x,t) \ : \ t=- r^{\frac{2}{N+p}},\ |x|^2 < -4 t \right \}.$$ 

Then, the following lemma holds 

\begin{lemma}\label{4uno}   For every $p\in\enne$, there exists a constant $\theta=\theta(p) >1$ such that, %considering      $0 \in \erre^{N+1}$, 
$$\Omega_{\theta r}^{(p)} (0) \supseteq \Omega_r^{(p)}(z) \quad \forall\,  z\in \Sigma_r, \quad \forall\, r>0.
$$ 
\end{lemma}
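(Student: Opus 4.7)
The plan is to exploit the left-invariance of $\phi_p$ under the group law of $\mathbb{K}$, reducing the statement to a direct Gaussian estimate at the origin. Indeed, one checks that $\Gamma(h\circ z_0,\,h\circ\zeta)=\Gamma(z_0,\zeta)$ (from $\Gamma(z_0,\zeta)=\gamma(\zeta^{-1}\circ z_0)$) and that the time-difference $t_0-t$ is preserved under left translation; hence $\phi_p$ is left-invariant and $\Omega_r^{(p)}(z)=z\circ\Omega_r^{(p)}(0)$. The lemma is therefore equivalent to: for every $z\in\Sigma_r$ and every $\zeta\in\Omega_r^{(p)}(0)$, $\phi_p(0,\,z\circ\zeta)>\frac{1}{\theta r}$.

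Next, I would unfold both sides. Setting $\rho:=r^{2/(N+p)}$, so that $r=\rho^{(N+p)/2}$, write $z=(y,-\rho)$ with $|y|^2<4\rho$ and $\zeta=(\xi,-\sigma)$ with $\sigma>0$. The hypothesis $\phi_p(0,\zeta)>1/r$ reads
$$\frac{1}{(4\pi\sigma)^{(N+p)/2}}\exp\!\left(-\frac{|\xi|^2}{4\sigma}\right)>\frac{1}{\rho^{(N+p)/2}},$$
which, after elementary manipulation, is equivalent to the explicit bounds
$$\sigma<\frac{\rho}{4\pi}\qquad\text{and}\qquad|\xi|^2<2(N+p)\,\sigma\log\!\left(\frac{\rho}{4\pi\sigma}\right).$$

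For the main estimate, use the composition law to compute $z\circ\zeta=(\xi+E(-\sigma)y,\,-(\rho+\sigma))$; unitarity of $E(-\sigma)$ then gives
$$\phi_p(0,\,z\circ\zeta)=\frac{1}{(4\pi(\rho+\sigma))^{(N+p)/2}}\exp\!\left(-\frac{|\xi+E(-\sigma)y|^2}{4(\rho+\sigma)}\right).$$
The inequality $|\xi+E(-\sigma)y|^2\le 2|\xi|^2+2|y|^2$, combined with the two bounds above and the elementary fact that $u\mapsto u\log(1/(4\pi u))$ is bounded on $(0,1/(4\pi))$ (with maximum $1/(4\pi e)$), yields a uniform estimate $|\xi+E(-\sigma)y|^2/(4(\rho+\sigma))\le C_1(p)$, where $C_1(p)=\tfrac{N+p}{4\pi e}+2$. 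Since $\rho+\sigma<\rho(1+\tfrac{1}{4\pi})$, this produces
$$\phi_p(0,\,z\circ\zeta)>\frac{e^{-C_1(p)}}{(4\pi+1)^{(N+p)/2}}\cdot\frac{1}{r}=:\frac{1}{\theta(p)\,r},$$
with $\theta(p):=(4\pi+1)^{(N+p)/2}\,e^{C_1(p)}>1$, which is the desired conclusion.

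The main conceptual pitfall to watch out for is the temptation to bound the \emph{ratio} $\phi_p(0,\zeta)/\phi_p(0,z\circ\zeta)$ uniformly: this ratio blows up as $\sigma\to 0^+$ along the fiber $\xi=0$, since $\phi_p(0,\zeta)\to\infty$ whereas $\phi_p(0,z\circ\zeta)$ remains bounded below. The correct move is therefore to estimate $\phi_p(0,z\circ\zeta)$ directly from below, using the constraint on $|\xi|^2$ coming from the hypothesis only to keep the exponent under control.
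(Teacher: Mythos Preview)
Your proof is correct and follows essentially the same approach as the paper: both arguments unwind the membership conditions into explicit Gaussian inequalities, use the triangle inequality $|\xi+E(-\sigma)y|^2\le 2|\xi|^2+2|y|^2$ together with the bounds $|y|^2<4\rho$ and $|\xi|^2<2(N+p)\sigma\log(\rho/(4\pi\sigma))$, and then control everything via the boundedness of $s\mapsto s\log(1/s)$ on $(0,1)$. The only cosmetic difference is that you invoke left-invariance to write $\Omega_r^{(p)}(z)=z\circ\Omega_r^{(p)}(0)$ and estimate $\phi_p(0,z\circ\zeta)$ from below directly, whereas the paper keeps $\zeta\in\Omega_r^{(p)}(z)$ and checks the condition $\zeta\in\Omega_{\theta r}^{(p)}(0)$ by reducing it to an inequality whose right-hand side grows like $\log\theta$; the underlying computation is the same.
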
 
\begin{proof}
Let $r>0$ and $z\in \Sigma_r.$ Then $z=(x,t),$ with 

$$t=-r^{\frac{2}{N+p}}\andd  |x|^2 < 4 r^{\frac{2}{N+p}}.$$

Let us now take $\zeta=(\xi,\tau) \in  \Omega_r^{(p)}(z).$ This means

\begin{eqnarray}\label{4.2}\nonumber &\phi_p(z,\zeta) > \dfrac{1}{r} &
\\ & \iff& \\\nonumber & \langle C^{-1}(t-\tau) ( x - E(t-\tau)\xi), x - E(t-\tau)\xi\rangle   <  \log \dfrac{r} {(4\pi (t-\tau))^{\frac{N+p}{2}} }. &
\end{eqnarray} 

Analogously, 

\begin{eqnarray*}\nonumber & \zeta \in  \Omega_{\theta r}^{(p)}(0) &
\\ & \iff& \\\nonumber & \langle C^{-1}(-\tau) E(-\tau)\xi, E(-\tau)\xi\rangle   <  \log \dfrac{\theta r} {(4\pi (-\tau))^{\frac{N+p}{2}} }. &
\end{eqnarray*} 
On the other hand, by \eqref{1.4} and \eqref{H}, 

\begin{eqnarray*} \langle C^{-1}(-\tau) E(-\tau)\xi, E(-\tau)\xi\rangle  \le  b^4 \dfrac{|\xi|^2}{|\tau|}, 
\end{eqnarray*} 
so that, $\zeta=(\xi,\tau)  \in  \Omega_{\theta r}^{(p)}(0) $ if $\tau<0$ and 

\begin{eqnarray}\label{4.3} |\xi|^2    <  \dfrac{1}{ b^4} |\tau| \log \dfrac{\theta r} {(4\pi |\tau|)^{\frac{N+p}{2}} }. 
\end{eqnarray} 
Then, to prove our lemma, it is enough to show that inequality \eqref{4.2} implies \eqref{4.3}. Now, from \eqref{4.2},  using 
\eqref{H}, \eqref{1.4}  and the inclusion  $z=(x,t)\in \Sigma_r$, we obtain 
 (we assume $b \ge 1$ so that $b^2 \le b^4$)
\begin{eqnarray*}|\xi|^2&\le& b^2 |E(t-\tau)\xi|^2  \\ &\le& 2 b^2 (|E(t-\tau)\xi - x|^2 +|x|^2) \\ &\le& 2 b^4
\left( (t-\tau) \langle C^{-1}(t-\tau) ( E(t-\tau)\xi-x), E(t-\tau)\xi-x \rangle  + 4|t| \right) \\ &<&  2 b^4 \left( (t-\tau) \log \dfrac{r} {(4\pi (t-\tau))^{\frac{N+p}{2}} }+ 4|t| \right) . 
\end{eqnarray*} 
Therefore, we will obtain  \eqref{4.3}, and hence the lemma, if for a suitable $\theta >1$ independent of $z$ and $\zeta$,  the following inequality holds
\begin{eqnarray}\label{4.4}Ê 2 b^4 \left( (t-\tau) \log \dfrac{r} {(4\pi (t-\tau))^{\frac{N+p}{2}} }+ 4|t| \right) \le 
\frac{1}{b^4}  |\tau| \log \dfrac{\theta r} {(4\pi |\tau|)^{\frac{N+p}{2}} }.
\end{eqnarray}

To simplify  the notation we put 

$$  \frac{ r} {(4\pi )^{\frac{N+p}{2}}} = \rho^{\frac{N+p}{2}} \iff \rho =  \frac{ r^{\frac{2}{N+p}} } {4\pi }.  $$
Hence, since  $z\in \Sigma_r$, 
$$|t|=4\pi\rho,$$
and inequality \eqref{4.4} can be written as follows:

\begin{eqnarray}\label{4.5} A_0  (t-\tau) \log \frac{\rho}{t-\tau}+ A_1\rho \le  A_2  |\tau| \log  \frac{\theta \rho}{|\tau|},\end{eqnarray} 
and the $A_i$'s are strictly positive constants independent of $z$ and $\zeta$. 

Since $\zeta \in \Omega_{r}^{(p)}(z)$, we have  
\begin{eqnarray*} \frac{1}{r} < \phi_\rho(z,\zeta) \le \left(\frac{1}{4 \pi (t-\tau)} \right)^{\frac{N+p}{2}}, 
\end{eqnarray*} 
then, 
$$ 0<t-\tau<\rho.$$ 
As a consequence, since 
$$ 4 \pi\rho=|t|<|\tau|\le |\tau - t| +|t|<\rho +4\pi\rho,$$ we get 

$$ \frac{1}{4\pi+1} \le \frac{\rho}{|\tau|}\le  \frac{1}{4\pi}. $$

Thus, the left hand side of \eqref{4.5} can be estimated from above as follows:

\begin{eqnarray*} A_0 (t-\tau) \log \frac{\rho}{t-\tau}+ A_1 \rho = \rho \left( A_o  \frac{t -\tau }{\rho}\log \frac{\rho}{t-\tau} +A_1\right) \le \rho(A_0S+A_1), 
\end{eqnarray*}

where $$S=\sup \left\{ s \log\frac{1}{s} \ : \ 0<s<1 \right\}.$$
Moreover,  the right hand side of \eqref{4.5} can be estimated from below as follows: 
\begin{eqnarray*} A_2 |\tau |  \log  \frac{\theta \rho}{|\tau|} \geq  \rho 4\pi A_2 \log  \frac{\theta}{4\pi+1}. \end{eqnarray*} 
Therefore, if we choose $\theta>0$ such that 
$$A_0S+A_1\le 4\pi A_2 \log  \frac{\theta}{4\pi+1}$$
inequality  \eqref{4.5}  is satisfied. This completes the proof. 

\end{proof} 

\section{Proof of Theorem \ref{harnack}}\label{proof_harnack} 

Since $\elle$ is left translation invariant on the Lie group $(\mathbb{K}, \circ)$, it is enough to prove Theorem  \ref{harnack} in the case $z_0= 0 \in \R^{N+1}.$ In particular, it is enough to prove the inequality 
 
 \begin{eqnarray}\label{5uno} u(z)\le C u(z_0), \mbox{\qquad with \ } z_0=0,\end{eqnarray} 
 for every non-negative smooth solution $u$ to 
 
 $$\elle u=0\inn \erreu,$$
 and for every  $z= (x,t) \in  P=\{(x,t) \  : \ |x|^2 < -4t \}.$
 
 The constant $C$ in \eqref{5uno} has to be independent of $u$.  To this end, taken a non-negative global solution $u$ to $\elle u=0$, we start with the Mean Value formula for $u$ on the $\elle$-level set  ${\Omega}_{2\theta r}^{(p)}(z_0)$, with $p>4$ and with $\theta$ given by Lemma \ref{4uno}:

\begin{equation}\label{5due}  u(z_0)= \frac{1}{2 \theta r} \int_{{\Omega}_{2\theta r}^{(p)}(z_0)} u(\zeta) W_{2\theta r}^{(p)} (z_0^{-1} \circ \zeta)\ d\zeta.
\end{equation}

Let us arbitrarily fix $z=(x,t) \in P.$ Then $t<0$ and $|x|^2<4|t|.$ In \eqref{5due} we choose $r>0$ such that 
$$t=-r^{\frac{2}{N+p}}.$$ 
By Lemma  \ref{4uno} we have the inclusion $${\Omega}_{2\theta r}^{(p)}(z_0)\supseteq {\Omega}_{ r}^{(p)}(z), $$
so that, since $u\geq 0$, from \eqref{5due} we get 

\begin{equation}\label{5tre}  u(z_0)\geq  \frac{1}{2 \theta r} \int_{{\Omega}_{r}^{(p)}(z)} u(\zeta) W_{2\theta r}^{(p)} (z_0^{-1} \circ \zeta)\ d\zeta.
\end{equation}

Let us now prove that, for a suitable positive constant $C$ independent of $u$ and of $z$, we have ($z_0^{-1}=z_0=0$):

\begin{equation}\label{5quattro}  \frac{ W_{2\theta r}^{(p)} (z_0^{-1} \circ \zeta)}{ W_{ r}^{(p)} (z^{-1} \circ \zeta)}\geq \frac{2\theta}{ C}\qquad\forall \zeta\in {\Omega}_{r}^{(p)}(z).
\end{equation} 
It will follow,  from \eqref{5tre},      
\begin{eqnarray*}  u(z_0)&\geq&  \frac{1}{r C} \int_{{\Omega}_{r}^{(p)}(z)} u(\zeta) W_{r}^{(p)} (z^{-1} \circ \zeta)\ d\zeta\\
&\phantom{=}&\mbox{(again by the Mean Value formula \eqref{mv})}\\
&=&  \frac{1}{C} u(z),
\end{eqnarray*}  
i.e., $u(z)\le C u(z_0),$ which is \eqref{5uno}.

To prove \eqref{5quattro} we first estimate from below $W_{2\theta r}^{(p)} (z_0^{-1} \circ \zeta)$. From the very definition of this kernel, by keeping in mind that $z_0= 0$, and letting $\zeta=(\xi, \tau),$ we obtain:
\begin{eqnarray*} W_{2\theta r}^{(p)} (z_0^{-1} \circ \zeta) &\geq& \frac{p\omega_p}{4(p+2)}  \frac{(R_{2\theta r}(z_0,\zeta))^{p+2}}{|\tau|^2} 
\\  &=& c'_p |\tau|^{\frac{p+2}{2}-2} (\log(2\theta r \phi_p(z_0,\zeta )))^{\frac{p}{2}+1} 
\\ &\phantom{\geq}& \mbox{ ($\phi_p(z_0,\zeta)\geq\frac{1}{\theta r}$ since $\zeta\in {\Omega}_{r}^{(p)}(\zeta)\subseteq {\Omega}_{\theta r}^{(p)}(z_0)$)} 
\\ &\geq& c'_p (\log (2 \theta))^{\frac{p}{2}+1} |\tau|^{\frac{p}{2} -1} \\ &\phantom{\geq}& \mbox{ (if $p>2$)}\\&\geq&  c_p |t|^{\frac{p}{2} -1}
\\  &=& c_p r^{\frac{p-2}{p+N}}.
\end{eqnarray*} 
Here, and in what follows,  $c'_p,c''_p, \ldots, c_p$ denote  strictly positive constants only depending on $p$. So, we have proved the following  inequality 
\begin{eqnarray} \label{5cinque} W_{2\theta r}^{(p)} (z_0^{-1} \circ \zeta) \geq c_p r^{\frac{p-2}{p+N}} \qquad\forall \zeta\in {\Omega}_{r}^{(p)}(z).
\end{eqnarray} 
Now we estimate $W_{ r}^{(p)} (z^{-1} \circ \zeta)$ from above, estimating, separately 
\begin{eqnarray} \label{5sei} K_1(z,\zeta)= R_r^p(0, z^{-1} \circ \zeta) W(z^{-1} \circ \zeta)\end{eqnarray} 
and 
\begin{eqnarray} \label{5sette} K_2(z,\zeta)= \frac{R_r^{p+2} (z_0, z^{-1} \circ \zeta)}{(t-\tau)^2}.\end{eqnarray} 
We have 
\begin{eqnarray}\label{5.8}\nonumber  K_1(z,\zeta)&=& \left( 4(t-\tau) \log \left( r \frac{\Gamma (z,\zeta)}{(4\pi (t-\tau))^{\frac{N+p}{2}}} \right)\right)^{\frac{p}{2}}W(z^{-1} \circ \zeta) )\\ 
&\le  & 2^p  \left( (t-\tau) \log \frac{r}{ (t-\tau)^{\frac{N+p}{2}}} \right)^{\frac{p}{2}} W(z^{-1} \circ \zeta). \end{eqnarray} 
Moreover, from \eqref{2.5} and \eqref{1.4}, we obtain 

\begin{eqnarray}\label{5.9}\nonumber
 W(z^{-1} \circ \zeta)&=& \frac{1}{4} \left| C^{-1} (\tau-t) ( \xi- E(\tau-t) x)\right|^2 \\
 &\le & \frac{b^4}{4} \frac{|\xi- E(\tau-t) x|^2} {(\tau-t)^2}. \end{eqnarray}
To estimate the right hand side of this inequality we use the inclusion  $\zeta\in \Omega^{(p)}_r(z)$ which implies:

\begin{eqnarray*} &\phi_p(z,\zeta) > \dfrac{1}{r} &
\\ & \iff& \\  & \left( \dfrac{1} {(4\pi (t-\tau))}\right)^{\frac{N+p}{2}} \exp\left( -\dfrac{1}{4}  \langle C^{-1}(t-\tau) ( x - E(t-\tau)\xi), x - E(t-\tau)\xi\rangle\right)  > \dfrac{1}{r}  &
\\ & \iff& \\   &\langle C^{-1}(t-\tau) ( x - E(t-\tau)\xi), x - E(t-\tau)\xi\rangle   <  \log \dfrac{r} {(4\pi (t-\tau))^{\frac{N+p}{2}} }. &
\end{eqnarray*} 

This inequality, keeping in mind \eqref{1.4}, implies

\begin{eqnarray*} \left| x - E(t-\tau)\xi\right|^2     \le b^2 (t-\tau)  \log \dfrac{r} {(4\pi (t-\tau))^{\frac{N+p}{2}} }. 
\end{eqnarray*} 

Then
\begin{eqnarray*} \left| \xi - E(\tau-t) x \right|^2     &\le& \left\| E(\tau-t) \right\|^2  \left|  E(t-\tau) \xi -x \right|^2 \\
 &\le&  b^4 (t-\tau)  \log \dfrac{r} {(4\pi (t-\tau))^{\frac{N+p}{2}} }\\
 &\le& c'_p \frac{r^{\frac{2}{N+p}}}{4\pi},
\end{eqnarray*}

where $$c'_p=b^4    \sup \left\{  s \log \frac{1}{s} : 0 <s<1\right\}.$$

Using this estimate in \eqref{5.9} and \eqref{5.8} we obtain:

\begin{eqnarray}\label{5.10} K_1 (z,\zeta) \le  c''_p r^{\frac{2}{N+p}} (t-\tau)^{\frac{p}{2}-1} \left( \log\frac{r}{{(4\pi(t-\tau))}^{\frac{N+p}{2}}}\right) ^{\frac{p}{2}} \le  c_p r^{\frac{p-2}{N+p}},
\end{eqnarray}

where, 
$c_p= c'''_p   S_p$, with 

$$S_p= \sup \left\{ s^{\frac{p}{2} -2} \left( \log \frac{1}{s}\right)^{\frac{p}{2}}   : 0 <s<1\right\}.$$

We stress that $S_p<\infty$ since $p>4$.

The same estimate holds for $K_2$. Indeed:
\begin{eqnarray}\label{5nove} K_2 (z,\zeta) &\le&  c'_p (t-\tau)^{\frac{p}{2}-1} \left( \log\frac{r}{{\left(4\pi (t-\tau)\right)}^{\frac{N+p}{2}}}\right) ^{\frac{p+2}{2}}\\ \nonumber   &\le& c^p r^{\frac{2}{N+p} (\frac{p}{2}-1)} = c_p r ^{\frac{p-2}{N+p}}, \end{eqnarray}

where, 
$$c_p= c'_p  \sup \left\{ s^{\frac{p}{2} -1} \left( \log \frac{1}{s}\right)^{\frac{p+2}{2}}   : 0 <s<1\right\} <\infty.$$

Keeping in mind \eqref{5sei} and  \eqref{5sette}, and the very definition of $W^{(p)}_r(z,\zeta)$, from inequalities \eqref{5.10} and 
 \eqref{5nove} we obtain 
 
\begin{eqnarray} \label{5dieci} W_{r}^{(p)} (z^{-1} \circ \zeta) \le c_p r^{\frac{p-2}{p+N}} \qquad\forall \zeta\in {\Omega}_{r}^{(p)}(z).
\end{eqnarray} 
 This inequality, together with  \eqref{5cinque}, implies  \eqref{5quattro}, and completes the proof of Theorem  \ref{harnack}.

\section{Appendix: A one-side Liouville theorem for  Ornstein--Uhlenbeck operators by recurrence}

Here we show a  one-side Liouville theorem for some Ornstein--Uhlenbeck (OU) operators  based on  recurrence  of the corresponding OU stochastic processes. 

It is a general fact from probabilistic potential theory (see in particular \cite{getoor}) that recurrence of a Markov process is equivalent to the fact that all excessive functions are constants (we also mention that the equivalence between  excessive functions and  super harmonic functions has been established in a general setting; see \cite{dynkin} and the references therein). 
  On the other hand, a  characterization of recurrent   OU processes is known (see \cite{eric} which extends   the seminal paper  \cite{dym}; see also \cite{Z81} for connections between recurrence and stochastic controllability). 

  \vv 
 
 { We present the main steps to prove   a one-side Liouville theorem in a self-contained way.  Comparing with \cite{dym}, \cite{eric} and \cite{getoor}, we simplify some proofs;  see in particular the proof of Theorem \ref{due}
 in which we also use a result in \cite{priola_zabczyk}.  We       
  do not     appeal to  the general theory of  Markov processes but  we use 
   some  basic stochastic calculus. }  
 It seems to be an open problem   
to find a purely analytic approach to proving such result. 
   
\hh 
Let $Q$ be a non-negative symmetric $N \times N$ matrix and let $B$ be a real 
$N \times N$ matrix. The   OU operator we consider is 
\begin{equation}\label{ss}
{\mathcal K}_0     = \frac{1}{2}\text{ tr}  (QD^2 ) + \langle Bx, \nabla \rangle =  \frac{1}{2}\text{ div}  (Q  \nabla ) + \langle Bx, \nabla \rangle.
\end{equation}
We will always assume the well-known Kalman  controllability  condition: 
 \begin{equation} \label{kal}   
{ \text{rank}[{Q}, B {Q}, \ldots, B^{N-1} {Q} ] =N,} 
\end{equation}
see \cite{eric}, \cite{Z81}, \cite{lanconelli_polidoro_1994},  \cite{DZ}, \cite{priola_zabczyk} and the references therein. 
 Under this assumption   ${\mathcal K}_0$
 is hypoelliptic, see \cite{lanconelli_polidoro_1994}. 
  Before stating the  Liouville theorem  we recall  that  a matrix $C$ is stable if all its eigenvalues have negative real part. 
\begin{theorem}\label{ouProb} Assume \eqref{kal}. Let $v: \R^N \to \R$ be a non-negative $C^2$-function such that 
$
{\mathcal K}_0 v \le 0$ on $\R^N$. Then $v$ is constant  if the following condition holds:

\vskip 1mm \noindent {\bf (HR)} The  real Jordan  representation of $B$ is
\begin{equation}\label{ss2}
 \begin{pmatrix}
 B_0 & 0\\
 0 & B_1
\end{pmatrix}
\end{equation}
where $B_0$ is stable and  $B_1$ is at most of dimension 2 and of the form
 $B_1 = [0]$ or 
 $B_1 =  \begin{pmatrix}
 0  & -\alpha \\
 \alpha & 0
\end{pmatrix}
$
 for some $\alpha \in \R$ (in this case we need $N \ge 2$).
   \end{theorem}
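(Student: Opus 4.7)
The plan is probabilistic: I would interpret $v$ as a supermartingale along the OU process associated with $\mathcal{K}_0$ and combine this with recurrence of that process under \textbf{(HR)}. Concretely, pick $\sigma$ with $\sigma\sigma^T=Q$ and a standard $N$-dimensional Brownian motion $(W_t)$, and consider the linear SDE $dX_t=BX_t\,dt+\sigma\,dW_t$, $X_0=x$, whose explicit solution $X_t^x=e^{tB}x+\int_0^t e^{(t-s)B}\sigma\,dW_s$ has generator $\mathcal{K}_0$. Applying It\^o's formula to $v(X_t^x)$ and using $\mathcal{K}_0 v\le 0$ together with $v\ge 0$, the process $M_t:=v(X_t^x)$ is a non-negative continuous local supermartingale, hence a genuine supermartingale; Doob's convergence theorem then yields $v(X_t^x)\xrightarrow[t\to\infty]{} L_x$ almost surely for some $[0,\infty)$-valued random variable $L_x$.

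The second step is to use recurrence of $X^x$ under \textbf{(HR)} to identify $L_x$. Granted that for every $y\in\R^N$ and every $\varepsilon>0$ one has $\mathbb{P}(X_t^x\in B(y,\varepsilon)\text{ for a sequence }t_n\uparrow\infty)=1$, continuity of $v$ produces a random subsequence along which $v(X_{t_n}^x)\to v(y)$; combined with the preceding step this forces $L_x=v(y)$ almost surely. Since $y\in\R^N$ is arbitrary, $v$ must be constant.

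The hard part is establishing neighborhood recurrence under \textbf{(HR)}, and this is where the very restrictive shape of $B$ becomes decisive. I would split $\R^N=V_0\oplus V_1$ along the real Jordan decomposition \eqref{ss2}. On $V_0$ the block $B_0$ is stable, and combined with \eqref{kal} (using the characterization of invariant measures from \cite{priola_zabczyk}) this yields a unique non-degenerate Gaussian invariant measure on $V_0$, making the $V_0$-projection positive recurrent. On $V_1$, using that $e^{tB_1}$ is orthogonal and that $\dim V_1\le 2$, the $V_1$-projection is a Gaussian process with covariance growing linearly in $t$ which, up to a time-dependent rotation, is a Brownian motion in dimension $1$ or $2$; as such it is neighborhood recurrent. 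The remaining task is to upgrade the marginal recurrence of the two components to recurrence of the joint process; here the Kalman condition \eqref{kal} is used once more to rule out degeneracies in the transition density. The dimensional bound $\dim V_1\le 2$ cannot be relaxed, since Brownian motion in $\R^d$ is transient for $d\ge 3$, which is precisely why the hypothesis \textbf{(HR)} is formulated in this form.
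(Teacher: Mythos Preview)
Your supermartingale step is correct and is a legitimate variant of the paper's argument. The paper also applies It\^o's formula to $v(X_t^x)$, but instead of invoking Doob's convergence theorem it uses optional stopping at the hitting time $\tau_O^x$ of an arbitrary open set $O$, obtains $\E[v(X^x_{\tau_O^x})]\le v(x)$ via Fatou, and reaches a contradiction by choosing $O=\{v>b\}$ and $x$ with $v(x)<a<b$. Your route through almost-sure convergence plus neighborhood recurrence is equally clean; both reductions need exactly the same input, namely recurrence of the OU process under \textbf{(HR)}.

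The gap is in your recurrence sketch. The step ``upgrade the marginal recurrence of the two components to recurrence of the joint process'' is the entire difficulty, and it does not follow from the ingredients you list. Recurrence of marginals does not imply recurrence of the pair even for nice diffusions (two independent planar Brownian motions are each recurrent while their product in $\R^4$ is transient), and positive recurrence of the $V_0$-marginal does not by itself repair this: one still has to show that the \emph{joint} transition kernel puts enough mass near an arbitrary target $(y_0,y_1)$ along an unbounded time set. Your appeal to the Kalman condition to ``rule out degeneracies'' gives strict positivity of $p_t(x,\cdot)$, but positivity of the density is far from recurrence; a quantitative lower bound of the right order in $t$ is needed, and that is precisely what has to be proved.

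The paper handles this differently and avoids any splitting of the process. It first uses the Jordan form only to compute the growth of $\det Q_t$ and show that \textbf{(HR)} forces $\int_1^\infty (\det Q_t)^{-1/2}\,dt=\infty$, hence $\int_1^\infty p_t(x,y)\,dt=\infty$ for all $x,y$. It then shows that the hitting probability $\phi_O(x)=\P(\tau_O^x<\infty)$ is excessive, computes that the potential of $(\phi_O-P_s\phi_O)/s$ is finite, and combines this with the divergence of $\int_1^\infty p_t(x,y)\,dt$ to force $P_s\phi_O=\phi_O$; finally it invokes the Priola--Zabczyk bounded-Liouville theorem to conclude $\phi_O\equiv 1$. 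If you want to salvage your decomposition approach, the honest target is the estimate $\det Q_t=O(t^{\dim V_1})$ as $t\to\infty$ (which is where the block structure really enters), after which you are back on the paper's track.
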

  {\sl The proof of Theorem \ref{ouProb} will immediately follow  by Lemma \ref{uno} and Theorem \ref{due} below.}  
   \begin{remark}
Note that when $N=2$ the matrix $B = \begin{pmatrix}
 0  & 1 \\
 0 & 0
\end{pmatrix}$   does not satisfy (HR). On the other hand $B= \begin{pmatrix}
 0  & 0 \\
 0 & 0
\end{pmatrix}$ verifies (HR) with $\alpha =0$. Moreover, 
 an example of possibly degenerate 
  two-dimensional OU operator for which  the one-side Liouville theorem holds is  
\begin{gather*}
{\mathcal K}_0 = \partial_{xx}^2 +  a \partial_{yy}^2 + x \partial_y - y \partial_x,\;\;\;\;\;  a \ge 0.
\end{gather*}
\end{remark}

\begin{remark} It is well-known,  that  condition \eqref{kal} is  equivalent to the fact that 
\begin{equation}\label{qtt2}
Q_t = \int_0^t \exp(sB) \, Q \exp(sB^T) ds \;\;\text{is positive definite for all} \; t>0
\end{equation}
(cf. \cite{eric}, \cite{lanconelli_polidoro_1994} and \cite{DZ}). Note that $C(t) =  \exp(-tB) Q_t  \exp(-tB^T)$ is used in \cite{lanconelli_polidoro_1994}   and in Section 5 of  \cite{cupini_lanconelli_2020} with $Q$ replaced by $A$. 
  \end{remark}  
  Let us introduce the OU stochastic process starting at $x \in \R^N$. It is the solution to  the following  linear SDE  
 \begin{equation}\label{ou34}
{ X_t^x(\omega) =  x + \int_0^t B X_s^x(\omega) ds
   \, + \,
  \sqrt{Q}  \,   W_t(\omega),\, \;\; t \ge 0,\;\; x \in \R^N,\; \omega \in \Omega,} 
 \end{equation} 
see, for instance, \cite{eric} and  \cite{priola_zabczyk}. Here $W = (W_t)$ is a standard $N$-dimensional Wiener process defined  a stochastic basis $( \Omega,  {\mathcal  F}, ({\mathcal  F}_t),  \P)$   (the expectation with respect to $\P$ is denoted by $\E$; as usual in the sequel we often do not indicate  the dependence on $\omega \in \Omega$).     
 
\vv
For any non-empty open set $O \subset \R^N$,  we consider  the {\sl hitting time} $\tau^x_O = \inf \{ t \ge 0 \, :\, X_t^x \in O \}$ (if $\{ \cdot \}$ is empty we write $\tau^x_O  = \infty$). 

\hh 
Now we recall  the notion of recurrence. The OU process  $(X^x_t)_{t \ge 0} = X^x$ is {\sl   recurrent} if for any $x \in \R^N$, for any non-empty open set $O \subset \R^N$,  one has
\begin{equation}\label{sst}
\phi_O (x) = \P (\tau^x_O < \infty ) =1.  
\end{equation}
 Thus recurrence means that with probability one,  the OU process reaches in finite time any open set starting from any initial position $x$.

\begin{lemma} \label{uno} Suppose that the OU process is recurrent.
 Let $v\in C^2(\R^N)$ be a non-negative function such that 
$ {\mathcal K}_0 v \le 0$ on $\R^N$. Then $v$ is constant.
\end{lemma}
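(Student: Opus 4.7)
The plan is to use It\^o's formula to turn the condition $\mathcal{K}_0 v \le 0$ into the statement that $t \mapsto v(X^x_t)$ is a non-negative supermartingale, and then to combine the martingale convergence theorem with recurrence to rule out any non-constant $v$.

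First I would apply It\^o's formula to $v(X^x_t)$, using \eqref{ou34} and $v \in C^2(\R^N)$, to obtain
$$v(X^x_t) = v(x) + \int_0^t \mathcal{K}_0 v(X^x_s)\, ds + \int_0^t \langle \nabla v(X^x_s), \sqrt{Q}\, dW_s \rangle.$$
The first integral is continuous, adapted and non-increasing (because $\mathcal{K}_0 v \le 0$), while the second is a continuous local martingale. Since $v\ge 0$, this local martingale is bounded below by $-v(x)$ and is therefore a true supermartingale; it follows that $(v(X^x_t))_{t\ge 0}$ is a non-negative supermartingale for every starting point $x$.

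Next I would invoke the martingale convergence theorem to deduce that $L^x_\infty := \lim_{t\to\infty} v(X^x_t)$ exists a.s.\ and is finite. Assume, for contradiction, that $v$ is not constant; by continuity one can pick $a<b$ so that both open sets $O_a := \{v<a\}$ and $O_b := \{v>b\}$ are non-empty. Using recurrence \eqref{sst} together with the strong Markov property of $X$, I would inductively construct an increasing sequence of a.s.\ finite stopping times $\tau_1 < \tau_2 < \cdots$ such that $X^x_{\tau_{2k-1}} \in \overline{O_a}$ and $X^x_{\tau_{2k}} \in \overline{O_b}$ for every $k \ge 1$: given $\tau_{2k-1}$, recurrence applied to the process restarted at $X^x_{\tau_{2k-1}}$ yields finiteness of the next hitting time of $O_b$, and similarly for $O_a$. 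By continuity of $v$ one then has $v(X^x_{\tau_{2k-1}}) \le a$ and $v(X^x_{\tau_{2k}}) \ge b$ for all $k$, so
$$\liminf_{t\to\infty} v(X^x_t) \le a < b \le \limsup_{t\to\infty} v(X^x_t),$$
contradicting the existence of $L^x_\infty$.

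The main technical nuisance is this iteration step: \eqref{sst} only asserts finiteness of a \emph{single} hitting time, so the strong Markov property must be used to reset the process at each $\tau_k$. Everything else is routine: no growth hypothesis on $v$ enters the It\^o decomposition, because non-negativity of $v$ is exactly what promotes the local martingale part to a genuine supermartingale.
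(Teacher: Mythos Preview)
Your argument is correct, but the second half diverges from the paper's route. After the same It\^o decomposition, the paper stops the process at a \emph{single} hitting time: if $v$ were not constant, pick $z$ with $v(z)<a$ and the non-empty open set $U=\{v>b\}$; optional stopping plus Fatou give $v(z)\ge \E[v(X^z_{\tau^z_U})]\ge b>a$, a direct contradiction. Your proof instead runs the non-negative supermartingale to its a.s.\ limit and manufactures an oscillation by iterating hitting times of two level sets. Both are standard arguments; the paper's version is leaner because it uses recurrence exactly once and never needs the strong Markov property, whereas your approach trades that for the more probabilistically vivid picture (recurrence forces infinitely many crossings, which a convergent supermartingale cannot sustain). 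Incidentally, your treatment of the local-martingale issue---bounding $M_t$ below by $-v(x)$ to upgrade it to a supermartingale---is actually more careful than the paper, which simply calls $M$ a martingale.
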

\begin{proof} We will adapt an  argument used in the proof of Lemma 3.2 of \cite{getoor}  to show that excessive functions are constant for      recurrent Markov processes. 

Let us fix $x \in \R^N$. Applying the It\^o formula and using the fact that ${\mathcal K}_0 v \le 0$ we get, $\P$-a.s.,   
\begin{gather*}
v(X^x_t) = v(x) + \int_0^t {\mathcal K}_0 v (X_s^x) ds + M_t \le v(x) + M_t,  \;\; t \ge 0,
\end{gather*}
 where we are considering  the martigale $M= (M_t)$,  $M_t = \int_0^t  \nabla v(X_s^x) \cdot \sqrt{Q} dW_s$.
 
 Let $O \subset \R^N$ be a non-empty open set and consider 
the  hitting time $\tau^x_O$.   
 We have  $0 \le  v(X^x_{t \wedge \tau_O^x}) \le  v(x) + M_{t \wedge \tau_O^x}$, $t \ge 0$.  
  By the Doob optional stopping theorem    we obtain  
\begin{gather*}
 \E[ v(X^x_{t \wedge \tau_O^x})] \le v(x),\;\; t \ge 0.
\end{gather*}
Hence 
\begin{gather} \label{d22}
v(x) \ge  \E[ v(X^x_{n \wedge \tau_O^x})] \ge  \E[ v(X^x_{n \wedge \tau_O^x})\, 1_{\{ \tau_O^x < \infty \}}],\;\; x \in \R^N,\;\; n \ge 1.
\end{gather}
Recall that 
$\P (\tau_O^x < \infty ) =1$, for any $x \in \R^N$. 
 By the Fatou lemma (using also the continuity of the paths of the OU process) we infer
\begin{equation}\label{ss1}
  \E[ v(X^x_{\tau_O^x}) ] = 
  \E[\liminf_{n \to \infty}  v(X^x_{n \wedge \tau_O^x}\,)] 
 \le v(x). 
  \end{equation}  
Now we argue by  contradiction.  Suppose that $v$ is not constant. Then there exists $0 < a < b$,  $z \in \R^N$ such that $v(z) <a$ and $U = \{ v >b\}$ $=\{x \in \R^N \, :\,  v(x)  >b\}$ which is a non-empty open set. By \eqref{ss1} with $x = z$ we obtain
\begin{gather*}
a > v(z) \ge    \E \big [ v(X^z_{\tau_{U}^z})
\big ] \ge   b 
\end{gather*}
because on the event $\{ \tau_{U}^z < \infty \}$ we know that $X^z_{\tau_{U}^z} \in \{ v \ge b \}$. 
 We have found the contradiction $a >b$. Thus $v $ is constant. 
  \end{proof}
  Recall the {\sl OU Markov semigroup}  $(P_t)= (P_t)_{t \ge 0}$,
  \begin{gather} \label{d33}
 P_t f(x) = (P_tf)(x) = \E [f(X^x_t)] =   \int_{\R^N} f(y) \, p_t(x,y) dy,\;\; t >0,
 \end{gather}
where $x \in \R^N$, $f: \R^N \to \R$    Borel and bounded and  
  $p_t(x,y) = 
   \frac{ e^{- \frac{ |Q_t^{-1/2} (e^{tB} x - y)|^2}{2}} } { \sqrt{(2 \pi)^N \det(Q_t)} }   \, $. We set $P_0 f= f$. 
  The associated {\sl potential} of a non-negative Borel  function $g: \R^N \to \R$ is 
  \begin{equation}\label{po}
Ug(x) =\int_{0}^{\infty}P_t g(x) dt,\;\; x \in \R^N.
\end{equation} 
  Clearly, in general it can also assume the value $\infty$ (cf. \cite{getoor}). 
  
\begin{remark}  Let $A$ be an 
  empty open set and let  
   $1_A$ be  the indicator function of $A$.   
  The probabilistic interpretation of $U 1_A$ is as follows. First one defines the sojourn time or occupation time of $A$ (by the OU process starting at $x$)  as
 \begin{equation*} 
J_A^x(\omega)= \int_{0}^{\infty} 1_{A}(X_t^x (\omega))dt,\;\; \omega \in \Omega;
 \end{equation*} 
it is the total amount of
time that the sample path  $t \mapsto X^x_{t}(\omega)$  
 spends in $A$. Then 
 $
 \E [J_A^x ] =   
 \int_{0}^{\infty} \E [1_{A}(X_t^x)] dt$ $  = U 1_A  (x)
 $
 is 
 the average sojourn time  or the expected occupation time of $A$. 
 \end{remark} 
 {  The next result  is a reformulation of  a theorem in \cite{eric} at page 822 (see also the comments before such theorem and \cite{dym}). Erickson proves   
  some parts of the theorem and refers to \cite{dym} for the proof of 
  the remaining parts.
 \begin{theorem} \label{due}
Assume \eqref{kal}. The next  conditions for the OU process are equivalent. 

\hh (i) Condition (HR) holds. 

\hh (ii) $\int_{1}^{\infty}  \frac{1}{ \sqrt{ \det(Q_t)} } dt = \infty$. 

\hh (iii) For any $x, y \in \R^N$, 
\begin{equation}\label{dd}
 \int_{1}^{\infty}  p_t(x,y) dt = \infty.
\end{equation}
(iv) The OU process $(X_t^x)$ is recurrent.
\end{theorem}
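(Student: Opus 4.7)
The plan is to establish the chain (i) $\Leftrightarrow$ (ii) $\Leftrightarrow$ (iii) $\Leftrightarrow$ (iv); the first two links are purely analytic, based on the asymptotics of the covariance matrix $Q_t$ and of the Gaussian density $p_t(x,y)$, while the last is probabilistic, based on stochastic calculus for the OU process.

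For (i) $\Leftrightarrow$ (ii) I would put $B$ into real Jordan form. Under (HR) one decomposes $\R^N = V_0 \oplus V_1$ into the $B_0$- and $B_1$-invariant subspaces: the block of $Q_t$ associated to $V_0$ converges as $t \to \infty$ to a positive-definite matrix (because $B_0$ is stable), while the block associated to $V_1$ grows linearly in $t$ (because $e^{sB_1}$ is either the identity or a rotation, and by Kalman the time-average $\frac{1}{t}\int_0^t e^{sB_1} Q_{11} e^{sB_1^T}\,ds$ has a positive-definite limit). Hence $\det Q_t \asymp t^{d_1}$ with $d_1 = \dim V_1 \in \{1,2\}$, so $\int_1^\infty (\det Q_t)^{-1/2}\,dt = \infty$. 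If (HR) fails, either some eigenvalue of $B$ has strictly positive real part (yielding exponential growth of $\det Q_t$) or the non-stable block has dimension $\ge 3$ or a non-trivial Jordan cell (yielding polynomial growth of $\det Q_t$ of order $\ge 3$); in both cases the integral in (ii) is finite.

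For (ii) $\Leftrightarrow$ (iii) I would exploit the explicit density
\begin{equation*}
p_t(x,y)=\frac{1}{\sqrt{(2\pi)^N\det Q_t}}\exp\!\Bigl(-\tfrac12\bigl|Q_t^{-1/2}(e^{tB}x-y)\bigr|^2\Bigr).
\end{equation*}
Under (HR) one has $\sup_{t \ge 0}\|e^{tB}\| < \infty$; this is essentially the content of the result cited from \cite{priola_zabczyk}, applied to $B_1$ (the stable part $B_0$ automatically contributes a vanishing term). Combining this with the block decomposition of the previous step, $|Q_t^{-1/2}(e^{tB}x-y)|^2$ stays bounded in $t$ for each fixed $x, y$, so the exponential factor is bounded above and below by strictly positive constants; consequently $p_t(x,y) \asymp (\det Q_t)^{-1/2}$ for $t \ge 1$, and the two integrals have the same behaviour at infinity. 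When (HR) fails the prefactor alone makes both integrals finite.

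For (iii) $\Leftrightarrow$ (iv), the direction (iv) $\Rightarrow$ (iii) is straightforward: recurrence implies $\E[J_O^x] = \int_0^\infty P_t 1_O(x)\,dt = \infty$ for every nonempty open $O$ and every $x$ (strong Markov at successive hits of $O$, with positive expected sojourn per excursion by path continuity), and the continuity of $p_t(x,\cdot)$ then yields $\int_1^\infty p_t(x,y)\,dt = \infty$ pointwise. The main obstacle is the reverse implication. Arguing by contradiction, assume the process is transient, so that $\phi_O(x_0) < 1$ for some $x_0$ and some nonempty open $O$. The strategy is to combine the strong Markov property at successive returns to $O$ with the strict positivity of $p_t$ (guaranteed by Kalman) and Dynkin's formula applied to the bounded function $\phi_O$, in order to produce a uniform escape estimate $\sup_{y \in K} \phi_O(y) \le \rho < 1$ on a suitable compact set $K$ disjoint from $O$; iterating then gives a geometric-type bound on the number of visits to $O$ and hence $\E[J_O^{x_0}] < \infty$, contradicting (iii) via the continuity of $p_t$. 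The technical heart of the argument is this uniform escape estimate, obtained through the basic stochastic-calculus simplifications alluded to in the introduction of the Appendix.
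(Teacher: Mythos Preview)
Your treatment of (i) $\Leftrightarrow$ (ii) and (ii) $\Leftrightarrow$ (iii) is broadly in line with the paper's, with one small difference worth noting: for (ii) $\Rightarrow$ (iii) the paper does not use the block decomposition of $Q_t$ to bound the exponent, but simply the monotonicity $Q_t \ge Q_1$ for $t \ge 1$, which gives $\|Q_t^{-1}\| \le 1/\lambda(1)$ directly and, combined with $\sup_{t}\|e^{tB}\|<\infty$, yields the pointwise lower bound on $p_t(x,y)$. (The paper in fact only argues the chain (i) $\Rightarrow$ (ii) $\Rightarrow$ (iii), referring to \cite{dym} and \cite{eric} for the other directions.)

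The substantive divergence is in how recurrence is obtained. You propose (iii) $\Rightarrow$ (iv) by contradiction: assume transience, manufacture a uniform escape estimate on a compact set via the strong Markov property and Dynkin's formula, iterate to get a geometric bound on the number of visits, and deduce $\E[J_O^{x_0}]<\infty$, contradicting (iii). This is exactly the ``quite involved Khasminskii argument'' from \cite{dym} that the paper singles out in its Remark and deliberately \emph{bypasses}. The paper instead proves (i) $\Rightarrow$ (iv) directly: one checks that $\phi=\phi_O$ is excessive, then a telescoping computation shows that the potential $U f_s$ of $f_s=(\phi-P_s\phi)/s\ge 0$ is finite; since (iii) gives $\int_1^\infty p_t(x,y)\,dt=\infty$, this forces $f_s=0$ a.e., so $P_s\phi=\phi$ a.e.\ for every $s$. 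Hence $P_t\phi$ is a bounded harmonic function for each $t>0$, and the two-sided Liouville theorem of Priola--Zabczyk \cite{priola_zabczyk} (valid under (HR)) makes it constant; letting $t\to 0^+$ and evaluating at a point of $O$ gives $\phi\equiv 1$. What the paper's route buys is that the hard probabilistic step (your ``technical heart'', the uniform escape estimate) is replaced by an analytic input---the bounded Liouville theorem---that is already available off the shelf; the whole argument for recurrence then fits in a few lines.

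Your strategy is not wrong---it is the classical one---but you have left precisely the step the paper calls ``quite involved'' as a black box, and describing it as a ``basic stochastic-calculus simplification'' undersells what is needed (irreducibility, strong Feller, and a careful iteration). Also, your sketch of (iv) $\Rightarrow$ (iii) (which the paper does not attempt) needs a bit more: from recurrence you get $U1_O(x)=\infty$ for every nonempty open $O$, but passing from this to the pointwise statement $\int_1^\infty p_t(x,y)\,dt=\infty$ for \emph{every} $y$ requires more than continuity of $p_t(x,\cdot)$ at fixed $t$---you need some uniformity in $t$ as $O$ shrinks to $\{y\}$.
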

   We will only deal with    the  proofs of  $(i)  \Rightarrow (ii) \Rightarrow (iii)$ and $(i) \Rightarrow (iv)$; the last implication is   needed to prove the  one-side Liouville theorem in  Lemma \ref{uno}. 
  
  The   proof of the recurrence $(i) \Rightarrow (iv)$
  is different and  simpler than the  proof given in  \cite{dym}  which  also \cite{eric} mentions (see the remark below for more details).
  
\begin{remark}  
  In \cite{dym} it is proved that $(iii)  \Rightarrow (iv) $ by showing first that (iii) implies that,  for any non-empty open set $O$, one has
 $U1_O \equiv  \infty$, and then
    using a quite involved Khasminskii argument 
    (see pages 142-143 in \cite{dym}) which uses  the  strong Markov property,   the   irreducibility and  strong Feller property of the OU process.  Alternatively, the fact that 
 $U1_O \equiv  \infty$, for any non-empty open set $O$, is equivalent to recurrence can be obtained  using a potential theoretical approach involving excessive functions 
 as in \cite{getoor} (see in particular the  proof that (ii) implies (iv) in Proposition 2.4  and Lemma 3.1 in \cite{getoor}).
 \end{remark}
 }
 \begin{proof}
\hh  $ {\bf   (i)  \Rightarrow (ii)}$.    This can be  proved as in the proof of Lemma 6.1 in  \cite{dym} by using the Jordan decomposition of the matrix $B$  (see also the remarks in \cite{eric}).

\hh $ {\bf  (ii)  \Rightarrow (iii)}$   
   Note that $Q_{t} \le Q_{T}$ (in the sense of positive symmetric matrices) if $0< t \le T$. Hence by the Courant-Fischer min-max principle,
we have $\lambda(t) \le \lambda(T)$ (where $\lambda (s)$ is the minimal eigenvalue of $Q_s$). Hence, there exists $M>0$ such that, for $t \ge 1$,
\begin{gather*}
\langle Q_t^{-1}(e^{tB} x - y), e^{tB} x - y \rangle \le \frac{1}{\lambda(t)} |e^{tB} x - y|^2 \le 
\frac{M}{\lambda(1)} (|x|^2  + |y|^2). 
\end{gather*}
 Then $p_t(x,y) \ge \exp(- \frac{M}{2\lambda(1)} (|x|^2  + |y|^2))
 $ $
 \frac{ 1 } { \sqrt{(2 \pi)^N \det(Q_t)} }  $, $t \ge 1$, and 
  \eqref{dd} holds if (ii) is satisfied. 

\hh {${\bf   (i)  \Rightarrow (iv)}$  
   The proof of this  assertion is inspired by  \cite{getoor} and uses  also the Liouville-type theorem for bounded harmonic function proved in \cite{priola_zabczyk}.
   
 \vv   Let us fix a  non-empty open set $O \subset \R^N$ and consider the function
$ \phi_O = \phi : \R^N \to [0,1]$ (cf. \eqref{sst}),    
    $\phi(x) = \P (\tau^x_O < \infty )$, $x\in \R^N$. 
       {\sl We have to prove  that $\phi$ is identically 1.}

  Using the OU semigroup $(P_t)$ we first  check that 
\begin{equation}\label{exc}
P_r \phi (x) \le \phi (x),\;\; r \ge 0,\;\; x \in \R^N.   
\end{equation}
This is a known fact.    We briefly recall the proof for the sake of completeness.   Let us fix $x \in \R^N$ and $r>0 $ and note that $\phi$ is a Borel and bounded function. 
  Since
$\P  (X_{t+r}^x \in O,\;\; \text{for some} \; t  \ge 0 ) $ $\le  
  \P( X_{t}^x \in O,\;\; \text{for some} \; t  \ge 0 ) = \phi(x)$, we get \eqref{exc} 
 by the Markov property:
\begin{gather*} 
  \P (  X_{t+r}^x \in O,\; \text{for some} \; t  \ge 0 )= 
 \E \big [ \E [1_{\{ X_{t+r}^x \in O,\; \text{for some} \; t  \ge 0 \} }    \setminus {\mathcal F}_r] \big ] \\ = \E  \big  [   \phi ( X_{r}^x) \big] 
  = P_r \phi (x).  
\end{gather*}  
 Now take any decreasing sequence $(r_n)$ of positive numbers converging to 0, i.e., $r_n \downarrow 0$. We have
   $\{ X_{t}^x \in O,\;\; \text{for some} \; t  \ge 0 \} = $ $\cup_{n \ge 1}
    \{ X_{t+r_n}^x \in O,\;\; \text{for some} \; t  \ge 0 \}$ (increasing union) and so 
    $\P ( X_{t+r_n}^x \in O,\;\; \text{for some} \; t  \ge 0)$  $ = P_{r_n} \phi(x)     \uparrow \phi(x)$.   Hence
\begin{equation}\label{wee}
P_s \phi(x)  \uparrow \phi(x),\;\; \text{as $s  \to 0^+$,\ $x \in \R^N.$ }
\end{equation}    
 Since $\phi \ge 0$,  properties  \eqref{exc} and \eqref{wee} say that $\phi$ is an excessive function. 
 
 Let us fix $s>0$ and introduce
  the non-negative function $f_s =  \frac{(f - P_{s} \phi )}{s}$. We have
 \begin{gather} \label{2w}
0 \le U f_s (x)  = \frac{1}{s}\int_{0}^{s}
 P_t {\phi} (x) dt < \infty,\;\;\; x \in \R^N.
\end{gather}
Indeed, for any $T>s$,  
   \begin{gather*}
0 \le \frac{1}{s} \int_{0}^{T}P_t ({\phi} - P_{s}{\phi}) (x) dt = \frac{1}{s}\int_{0}^{T}P_t {\phi}(x) dt 
- \frac{1}{s} \int_{0}^{T} P_{t+ \, s}{\phi} (x) dt
\\
= \frac{1}{s} \int_{0}^{T}P_t {\phi}(x) dt 
- \frac{1}{s} \int_{s}^{T + s} P_{t}{\phi} (x) dt 
= 
\frac{1}{s} \int_{0}^{s}P_t {\phi}(x) dt 
- \frac{1}{s} \int_{T}^{T + s} P_{t}{\phi} (x) dt 
\\
\le \frac{1}{s} \int_{0}^{s}P_t {\phi}(x) dt 
\end{gather*} 
(in the last passage we have used that ${\phi} \ge 0$). Passing to the limit as $T \to \infty$ we get \eqref{2w}.   Now  by the Fubini theorem, for any 
$ s > 0$, 
\begin{gather*}
\infty > U f_s(x) = \int_{0}^{\infty} dt \int_{\R^N} f_s(y) p_t(x,y) dy \ge 
 \int_{\R^N} f_s (y)  \big (\int_{1}^{\infty}  p_t(x,y) dt)dy.
\end{gather*}
Since we know   \eqref{dd}   we deduce that  
  $f_s =0$, a.e. on $\R^N$. This  means that, for any $s \ge 0$, 
 \begin{gather}
 {\phi}(x) = P_{s} {\phi}(x),\;\; \text{for any $x \in \R^N$ \, a.e.}.
\end{gather} 
It follows that, for any $t > 0$, 
\begin{equation}\label{ee}
P_t {\phi}(x) = P_t(P_s {\phi})(x) =  P_s (P_t {\phi})(x),  \;\; s\ge 0,
\end{equation}
holds, for any $x \in \R^N$ (not only $a.e.$).
 Thus, for any $t>0$, $P_t {\phi}$ is a bounded harmonic function for $(P_t)$. 
 By hypothesis (HR) and Theorem 3.1 in \cite{priola_zabczyk} we deduce  that $P_t {\phi} \equiv c_t$
 for some constant $c_t $.
 
 Since ${\phi}$ is excessive we know that $P_t {\phi}(x)  \uparrow {\phi} (x)$ as $t \to 0^+$, $x \in \R^N$. It follows that $c_t \uparrow c_0$ and ${\phi} \equiv c_0$.  Take $z \in O$. We have $\phi (z) =1$. Hence $\phi$ is identically 1 and the proof is complete.    
 }
    \end{proof}

\section*{Acknowledgment}
We would like to warmly thank the anonymous referee whose criticism to the first version of the paper led us to
strongly improve our results.

The authors  have been partially supported by the Gruppo Nazionale per l'Analisi Matematica, la Probabilit\`a e le
loro Applicazioni (GNAMPA) of the Istituto Nazionale di Alta Matematica (INdAM).

\bibliographystyle{alpha} 
\bibliography{bibliografia}

\end{document}